\newtheorem{theorem}{Theorem}
 \newtheorem{definition}{Definition}
\newtheorem{lemma}{Lemma}
\def\neweq#1{\begin{equation}\label{#1}} \def\endeq{\end{equation}}
\def\eq#1{(\ref{#1})}
\newcommand{\R}{\mathbb{R}}
\def\sr{\xi(x)}
\begin{document}

\title{Torsional instability in suspension bridges:\\
the Tacoma Narrows Bridge case}

\author{Gianni ARIOLI -- Filippo GAZZOLA }
\date{}
\maketitle

\begin{center}
{\small Dipartimento di Matematica -- Politecnico di Milano \\
        Piazza Leonardo da Vinci 32 - 20133 Milano, Italy\\
{\tt gianni.arioli@polimi.it,\, filippo.gazzola@polimi.it}}
\end{center}

\begin{abstract}
All attempts of aeroelastic explanations for the torsional instability of suspension bridges have been somehow criticised and none of them is unanimously
accepted by the scientific community. We suggest a new nonlinear model for a suspension bridge and we perform numerical experiments with the parameters
corresponding to the collapsed Tacoma Narrows Bridge. We show that the thresholds of instability are in line with those observed the day of the collapse.
Our analysis enables us to give a new explanation for the torsional instability, only based on the nonlinear behavior of the structure.
\end{abstract}

{\bf Keywords:} suspension bridges, torsional instability, Hill equation, modal analysis.

\section{Introduction}

Many suspension bridges manifested aerodynamic instability and uncontrolled oscillations leading to collapses,
see e.g.\ \cite{akesson,bridgefailure}. These accidents are due to several different causes and the focus of this paper is to analyse
those due to wide torsional oscillations. Thanks to the videos available on the web \cite{tacoma}, many people have seen
the spectacular collapse of the Tacoma Narrows Bridge (TNB) occurred in 1940. The torsional oscillations were considered the main cause of the
collapse \cite{ammann,scott}. But the appearance of torsional oscillations is not an isolated event occurred only at the TNB:
among others, we mention the collapse of the Brighton Chain Pier in 1836, the collapse of the Wheeling Suspension Bridge in 1854,
the collapse of the Matukituki Suspension Footbridge in 1977. We refer to \cite{book} for a detailed description of these collapses.\par
These accidents raised some fundamental questions of deep interest for both engineers and mathematicians. Due to the vortex shedding,
longitudinal oscillations are to be expected in suspension bridges, but the reason of the sudden transition from longitudinal to torsional oscillations is less clear.
So far, no unanimously accepted response to this question has been found.
Most attempts of explanations are based on aeroelastic effects such as the frequency of the vortex shedding, parametric resonance, and flutter theory.
The purpose of the present paper is to show that the origin of the torsional instability is purely structural.\par
Von K\'arm\'an, a member of the Board appointed for the Report \cite{ammann}, was convinced that the torsional motion seen on the day of the collapse
was due to the vortex shedding that amplified the already present torsional oscillations and caused the center span to violent twist until the collapse,
see \cite[p.31]{delatte}. But Scanlan \cite[p.841]{scanlan} proved that the frequency of the torsional mode had nothing to do with the natural frequency
of the shed vortices following the von K\'arm\'an vortex pattern: the calculated frequency of a vortex caused by a 68 km/h wind is 1 Hz, whereas the
frequency of the torsional oscillations measured by Farquharson (an engineer witness of the TNB collapse, the man escaping in the video \cite{tacoma})
was 0.2 Hz, see \cite[p.120]{billah}. The conclusion in \cite[p.122]{billah} is that the vortex trail is a consequence, not a primary cause of the
torsional oscillation. Also Green-Unruh \cite[$\S$ III]{green} believe that vortices form independently of the motion and are not responsible
for the catastrophic oscillations of the TNB. The vortex theory was later rediscussed by Larsen \cite[p.247]{larsen}, who stated that vortices may only
cause limited torsional oscillations, but cannot be held responsible for divergent large-amplitude torsional oscillations. Recently, McKenna \cite{mckdcds}
noticed that the behavior described in Larsen's paper was never observed at the Tacoma Bridge and also Green-Unruh \cite{green}
believe that {\em the Larsen model does not adequately explain data or simulations at around 23m/s}.\par
Bleich \cite{bleichsolo} suggested a possible connection between the instability in suspension bridges and the flutter speed of aircraft wings; but
Billah-Scanlan \cite[p.122]{billah} believe that it is a great mistake to relate these two phenomena. Billah-Scanlan also claim that their own work
proves that the failure of the TNB was in fact related to an aerodynamically induced condition of self-excitation in a torsional degree of freedom;
but Larsen \cite[p.244]{larsen} believes that the work in \cite{billah} fails to connect the vortex pattern to the switch of
damping from positive to negative. Moreover, McKenna \cite{mckdcds} states that \cite{billah}
{\em is a perfectly good explanation of something that was never observed, namely small torsional oscillations, and no explanation of what really occurred,
namely large vertical oscillations followed by torsional oscillations}.\par
The parametric resonance method was adapted to a TNB model by Pittel-Yakubovich \cite{pittel0,pittel}, see also \cite[Chapter VI]{yakubovich}
for the English translation and a more general setting. The conclusion on \cite[p.457]{yakubovich} claims that {\em the most dangerous phenomenon
for the stability of suspension bridges is a combination of parametric resonance}. But Scanlan \cite[p.841]{scanlan} comments these attempts by writing
that {\em Others have added to the confusion. A recent mathematics text \cite{yakubovich}, for example, seeking an application for a developed theory
of parametric resonance, attempts to explain the Tacoma Narrows failure through this phenomenon}. We refer to \cite{butikov,herrmann,jenkins} for
connections between (aerodynamic) parametric resonance, flutter theory, and self-oscillations, also applied to suspension bridges.\par
To conclude this quick survey of attempts for aeroelastic explanations, we mention that Scanlan \cite[p.209]{scanlan2} writes that
{\em ...the original Tacoma Narrows Bridge withstood random buffeting for some hours with relatively little harm until some fortuitous
condition ``broke'' the bridge action over into its low antisymmetrical torsion flutter mode}; the words {\em fortuitous condition} tell us
that no satisfactory aeroelastic explanation is available. Due to all these controversial discussions, McKenna \cite[$\S$ 2.3]{mckmonth} writes that
{\em there is no consensus on what caused the sudden change to torsional motion}, whereas Scott \cite{scott} writes that {\em opinion on the exact cause
of the Tacoma Narrows Bridge collapse is even today not unanimously shared}. Summarizing, all the attempts to find a purely aeroelastic explanation of the
TNB collapse fail either because the quantitative parameters do not fit the theoretical explanations or because the experiments in wind tunnels do not
confirm the underlying theory.\par
Nowadays the attention has turned to the nonlinear behavior of structures \cite{arenalacarbonara,lacarbonara}.
In a recent paper \cite{ag13} we gave an explanation in terms of a {\em structural instability}: we
considered an isolated nonlinear bridge model and we were able to show that, if the longitudinal oscillations are sufficiently small, then they
are stable whereas if they are larger they can instantaneously switch to destructive torsional oscillations. The main tool used there are
transfer maps (Poincar\'e maps), which highlight an instability when the characteristic multipliers exit the complex unit circle. The same
phenomenon was later emphasized for different models using the instability for the Hill equation, see \cite{befega,bergaz,bgz}, which can also be
explained by the Floquet theory.
All these results were obtained by considering isolated systems, that is, by neglecting both the aerodynamic forces and the dissipation.
The idea to consider an isolated system was already suggested by Irvine \cite[p.176]{irvine} for a suspension bridge model
similar to the one considered in \cite{ag13}: Irvine ignores damping of both structural and aerodynamic origin, his purpose being to simplify as
much as possible the model by maintaining its essence, that is, the conceptual design of bridges.
And since our purpose is precisely to highlight the role of the structure on the stability, we follow this suggestion: in the conclusions of the
present paper we mention how the aerodynamic effects combine with the structural effects.\par
In this paper we improve the results in \cite{ag13} by introducing a more precise model which also takes into account
the nonlinear restoring action of the cables+hangers system on the deck; the model is described in detail in Section \ref{cabbeam} and is the nonlinear
version of a linear model that we introduced in \cite{agmjm}. In Section \ref{nlm} we define the nonlinear longitudinal modes, that is, the
periodic purely longitudinal motions of the deck which may be unstable and create torsional motions. In Sections \ref{theores} and \ref{nu22} we describe
the theoretical framework that we use to study the stability of the longitudinal modes and we prove that they are stable for small energies.
Finally, we validate our theory by performing numerical simulations of the model with the parameters of the collapsed TNB.
We study the existence and the behavior of approximate longitudinal modes and we compute their instability thresholds.
Our numerical results confirm that the longitudinal oscillations with 8 or 9 nodes
and amplitudes of about 4m, namely the oscillations observed the day of the collapse, are prone to generate torsional oscillations, see Section
\ref{numstab}. The main conclusions and our explanation of the TNB collapse are collected in Section \ref{conclu}.

\section{The mathematical model}\label{cabbeam}

\subsection{Description of the structure}

Throughout this paper the variable $x$ indicates
the position along the deck and we denote the space and time derivatives as follows:
$$w=w(x)\ \Rightarrow\ w'=\frac{dw}{dx}\, ,\qquad\quad
w=w(t)\ \Rightarrow\ \dot{w}=\frac{dw}{dt}\, ,\qquad\quad
w=w(x,t)\ \Rightarrow\ \left\{\begin{array}{ll}
w_x=\frac{\partial w}{\partial x}\\
w_t=\frac{\partial w}{\partial t}
\end{array}\right.$$
and similarly for higher order derivatives. The physical constants are listed in Section \ref{constants}.

In a suspension bridge four towers sustain two cables that, in turn, sustain the hangers. At their lower endpoint the hangers are linked to the
deck and sustain it from above. The hangers are hooked to the cables and the deck is hooked to the hangers.
In this section we quickly revisit the bridge model that we recently introduced in \cite{agmjm}: the displacements involved were assumed to be small,
which justifies the asymptotic expansion of the energies. Here we refine this model by considering more terms: this raises several nonlinearities in the
Euler-Lagrange equations. Moreover, we neglect the flexibility of the hangers: this will be justified below. As usual in engineering
literature, we assume that positive displacements of the deck and the cables are oriented downwards and the origin is at the level of the deck at rest.\par
We assume that the deck has length $L$ and width $2\ell$ with $2\ell\ll L$; we model it as a degenerate plate consisting of a beam representing the
midline and cross sections which can rotate around the beam. The beam contains the barycenters of the cross sections and we denote its position with $y=y(x,t)$.
The angle of rotation of the cross sections with respect to the horizontal position is denoted by $\theta=\theta(x,t)$. Then the positions of the free edges
of the deck are given by $y\pm\ell\sin\theta\approx y\pm\ell\theta$ since we aim to study what happens in a small torsional regime.
We emphasise that our results do not aim to describe the behavior of the bridge when the torsional angle becomes large; instead, we explain how a
small torsional angle can suddenly increase, that is, our purpose is to describe the mechanism that triggers torsional oscillations.\par
Since the spacing between hangers is small relative to the span, the hangers can be considered as a continuous membrane
connecting the cables and the deck. We denote by $-s(x)$ the position of the cables at rest, $-s_0<0$ being the level of the left
and right endpoints of the cables ($s_0$ is the height of the towers); $L$ is the distance between the towers.
If a beam of length $L$ and linear density of mass $M$ is hanged to a cable whose linear density of mass is $m$, and
if $g$ denotes the gravitational constant, then the cable at rest is subject to a downwards vertical force density given by
$\big(M+m\sqrt{1+s'(x)^2}\big)\, g$. Each free side of the deck is connected with hangers to a cable so that
each cable sustains the weight of half deck; then, we obtain the following boundary value problem for $s(x)$:
\neweq{eqparabola}
\begin{cases}
H_0s''(x)=\left(\frac{M}{2}+m\sqrt{1+s'(x)^2}\right)\, g\,,\\
s(0)=s(L)=s_0\,.
\end{cases}
\endeq
It is not difficult to show that \eq{eqparabola} admits a unique solution which, moreover, is symmetric with respect to $x=L/2$, see
\cite[Proposition 1]{agmjm} for the details. In order to simplify notations we set
$$\sr:=\sqrt{1+s'(x)^2}\, ,$$
so that, $\sr$ represents the local length of the cable at rest. When the deck is mounted, the hangers are in tension and reach the length
$s(x)$, where $s$ is the solution of \eq{eqparabola}; if no additional load acts on the system, the equilibrium position of the deck
is horizontal and the position of the cables at equilibrium (deck mounted) is $-s(x)$ for $x\in(0,L)$.\par
The two sustaining cables are labeled with $i=1,2$. If $p_i=p_i(x,t)$ denotes the displacement of the cable $i$ with respect to the rest position,
then $p_i(x,t)-s(x)$ denotes the actual position of the cable.\par
The flexibility of the hangers has a significant effect on the frequencies of the higher modes when the deck is very stiff, see \cite{luco}.
However, as far as lower modes and weakly stiffened bridges are involved, the elastic deformation of the hangers may be neglected, see again
the results by Luco-Turmo \cite{luco}. Since we aim to model a very flexible deck as the TNB, we assume that the hangers are rigid, their action merely
being to connect the deck to the cables. This assumption is justified by the fact that the action of the cables is the main cause of the nonlinearity
of the restoring force, see Bartoli-Spinelli \cite[p.180]{bartoli}.
Moreover, the nonlinear contribution of the hangers is mainly due to their slackening but, as reported in \cite{ammann}, the slackening
of the hangers occurred at the TNB only {\em after} that the large torsional oscillations appeared. Our purpose is to understand
how negligible torsional oscillations of the deck suddenly become dangerous ones, that is, to describe what happens {\em before}
the slackening starts. This is a further reason to consider rigid hangers.
If we assume that the hangers have a fixed length, then $p_1=y+\ell\theta$ and $p_2=y-\ell\theta$: for our convenience, in the next subsection
we maintain the notation with the $p_i$'s while we replace them in the Euler-Lagrange equations \eq{caps1}-\eq{caps2}.

\subsection{Kinetic and potential energies of the structure}

The kinetic energy of the deck is the sum of the kinetic energy of the barycenter of the cross section and of the kinetic energy of the torsional angle.
Then the total kinetic energy of the bridge is
$$
E_{\rm{kin}}=\frac{M}{2}\int_0^L\left(\frac{\ell^2\theta_t^2}{3}+y_t^2\right)\, dx+\frac{m}{2}\int_0^L[(p_1)_t^2+(p_2)_t^2]\sr\, dx\, .
$$

The gravitational energies of the deck and the cables are, respectively, given by
$$
-Mg\int_0^L y\,dx\qquad\mbox{and}\qquad-mg\int_0^L (p_1+p_2)\sr\,dx\,.
$$

The elastic energy of the deck is composed by the bending energy of the beam and the torsional energy:
$$
E_{\rm{el}}=\frac{EI}{2}\int_0^L y_{xx}^2\, dx+\frac{GK}{2}\int_0^L \theta_x^2\, dx\, .
$$

The tension of each cable consists of two parts. The first part is the tension at rest $H(x)=H_0\sr$.
The amount of energy needed to deform each cable at rest under the tension $H(x)$ in the infinitesimal interval $[x,x+dx]$
from the original position $-s(x)$ to $-s(x)+p_i(x,t)$ ($i=1,2$) is the variation of length times the tension, that is
$$
E_c(x)\,dx=H_0\sr\, \Big(\sqrt{1+[s'(x)-(p_i)_x(x,t)]^2}-\sqrt{1+s'(x)^2}\Big)\, dx\,.
$$
Hence, using the asymptotic expansion
\neweq{asymptotic}
\sqrt{1+[s'-(p_i)_x]^2}-\sqrt{1+(s')^2}=-\frac{s'(p_i)_x}{\xi}+\frac{(p_i)_x^2}{2\xi^3}+\frac{s'(p_i)_x^3}{2\xi^5}+o\Big((p_i)_x^3\Big)
\endeq
and neglecting the higher order terms, the energy necessary to deform the whole cable is
\neweq{defenergy}
E_c(p_i)=\int_0^L E_c(x)\,dx\approx H_0\int_0^L\left(-s'(p_i)_x+\frac{(p_i)_x^2}{2\xi^2}+\frac{s'(p_i)_x^3}{2\xi^4}\right)\,dx\,.
\endeq
The second part of the tension of each cable is the additional tension
$$
\frac{AE}{L_c}\, \Gamma(p_i)
$$
due to the increment of length $\Gamma(p_i)$ of the cable ($i=1,2$). This increment requires the energy
\neweq{Etc}
E_{tc}(p_i)=\frac{AE}{2L_c}\,\Gamma(p_i)^2\,.
\endeq
In view of the asymptotic expansion \eq{asymptotic}, we can approximate the increment $\Gamma(p_i)$ of the length of
the cable (due to its displacement $p_i$) by
$$\Gamma(p_i):=\int_0^L\sqrt{1+[s'-(p_i)_x]^2}\,dx-L_c\approx
\int_0^L\left(\frac{-s'(p_i)_x}{\xi}+\frac{(p_i)_x^2}{2\xi^3}\right)\, dx\, .$$
A third order expansion then gives
$$\Gamma(p_i)^2=\left(\int_0^L \frac{s'(p_i)_x}{\xi}\right)^2-
\left(\int_0^L \frac{s'(p_i)_x}{\xi}\right)\left(\int_0^L\frac{(p_i)_x^2}{\xi^3}\right)+o\Big((p_i)_x^3\Big)\, .$$
Therefore, we approximate \eq{Etc} with
\neweq{Etc2}
E_{tc}(p_i)=\frac{AE}{2L_c}\left(\int_0^L \frac{s'(p_i)_x}{\xi}\right)^2
-\frac{AE}{2L_c}\left(\int_0^L \frac{s'(p_i)_x}{\xi}\right)\left(\int_0^L\frac{(p_i)_x^2}{\xi^3}\right)\, .
\endeq

\subsection{The Euler-Lagrange equations}

Some integration by parts show that the variation of the energy $E_c(p_i)$ in \eq{defenergy} is given by
$$
dE_c(p_i)z= H_0\int_0^L\left(s'-\frac{(p_i)_x}{\xi^2}-\frac{3s'(p_i)_x^2}{2\xi^4}\right)_x z\, dx\, .
$$
For the variation of the energy $E_{tc}(p_i)$ in \eq{Etc2}, we neglect terms of order larger than 2, and by integrating by parts we obtain
$$
dE_{tc} (p_i)z =
\frac{AE}{2L_c}\left(\int_0^L\frac{(p_i)_x^2}{\xi^3}\right)\int_0^L \frac{s''}{\xi^3}\, z\, dx
+\frac{AE}{L_c}\left(\int_0^L \frac{s''p_i}{\xi^3}\right)\int_0^L\left(\frac{s'}{\xi}-\frac{(p_i)_x}{\xi^3}\right)_x\, z\, dx\, .
$$

By recalling that $p_1=y+\ell\theta$ and $p_2=y-\ell\theta$, the first variation of the sum of all the energies involved yields the following
Euler-Lagrange equations:
\begin{eqnarray}
\!\!\!\!\!\!\big(M\!+\!2m\xi\big)y_{tt} &\!\!\!\!=\!\!\!\!& -EIy_{xxxx}+
H_0\left(\frac{2y_x}{\xi^2}+3\frac{s'(y_x^2+\ell^2\theta_x^2)}{\xi^4}\right)_x
-\frac{AE}{L_c}\left[\int_0^L\frac{y_x^2+\ell^2\theta_x^2}{\xi^3}\right]\!\frac{s''}{\xi^3} \notag\\
 & &-\frac{2AE}{L_c}\left[\int_0^L \frac{s''y}{\xi^3}\right]\!\left(\frac{s'}{\xi}\!-\!\frac{y_x}{\xi^3}\right)_x
+\frac{2AE\ell^2}{L_c}\left[\int_0^L \frac{s''\theta}{\xi^3}\right]\!\left(\frac{\theta_x}{\xi^3}\right)_x, \label{caps1}\\
\!\!\!\!\!\!\left(\tfrac{M}{3}\!+\!2m\xi\right)\theta_{tt} &\!\!\!\!=\!\!\!\!&
\frac{GK}{\ell^2}\theta_{xx}+2H_0\left(\frac{\theta_x}{\xi^2}+3\frac{s'y_x\theta_x}{\xi^4}\right)_x
-\frac{2AE}{L_c}\left[\int_0^L\frac{y_x\theta_x}{\xi^3}\right]\!\frac{s''}{\xi^3} \notag \\
 & &-\frac{2AE}{L_c}\left[\int_0^L \frac{s''\theta}{\xi^3}\right]\!\left(\frac{s'}{\xi}\!-\!\frac{y_x}{\xi^3}\right)_x
+\frac{2AE}{L_c}\left[\int_0^L \frac{s''y}{\xi^3}\right]\!\left(\frac{\theta_x}{\xi^3}\right)_x. \label{caps2}
\end{eqnarray}
This is the {\bf system describing the dynamics of a suspension bridge} that we will consider throughout the paper.
Since the degenerate plate is hinged between the two towers and the cross sections between the towers cannot rotate,
the boundary conditions to be associated to \eq{caps1}-\eq{caps2} read
\neweq{bc}
y(0,t)=y(L,t)=y_{xx}(0,t)=y_{xx}(L,t)=\theta(0,t)=\theta(L,t)=0\quad\forall t\ge0\, .
\endeq

For our investigation of \eq{caps1}-\eq{caps2} (both theoretical and numerical) we first set $\theta\equiv0$ in \eq{caps1} to obtain
\neweq{nony}
(M\!+\!2m\xi)y_{tt}\!+\!EI y_{xxxx}\!=\!
H_0\left(\frac{2y_x}{\xi^2}\!+\!\frac{3s'y_x^2}{\xi^4}\right)_x\!-\!
\frac{AE}{L_c}\left[\int_0^L\!\frac{y_x^2}{\xi^3}\right]\frac{s''}{\xi^3}\!
-\!\frac{2AE}{L_c}\left[\int_0^L\!\frac{s''y}{\xi^3}\right]\left(\frac{s'}{\xi}\!-\!\frac{y_x}{\xi^3}\right)_x
\endeq
and we find periodic solutions $y$ of different amplitude and number of zeros. Then, for each solution $y$ of \eq{nony} we consider \eq{caps2}
as a linear equation in $\theta$ with periodic coefficients and we study the stability of the trivial solution of \eq{caps2}.\par
Our characterization of torsional stability consists in taking a periodic solution $y$ of \eq{nony} and plugging it into \eq{caps2}; we say that
$y$ is {\em torsionally stable} if the trivial solution of the so obtained equation (see \eq{theta1} below) is stable. This is basically the characterization
suggested by Dickey \cite{dickey3} for a couple of oscillators related to a nonlinear string: it is also called {\em linear stability} in literature
\cite[Definition 2.3]{ghg}. An apparently stronger form of stability, without linearization and substitution of $y$, is the well-known Lyapunov
characterization. For a simpler system of ODEs Ghisi-Gobbino \cite{ghg} were able to prove that these two characterizations are equivalent.

\section{Nonlinear longitudinal modes}\label{nlm}

The milestone paper by Rabinowitz \cite{rabin} started the systematic study of the existence of periodic solutions for the nonlinear string equation
\neweq{nonlinw}
u_{tt}-u_{xx}+f(x,u)=0\quad(x,t)\in(0,\pi)\times\R_+\, ,\qquad u(0,t)=u(\pi,t)=0\quad t\in\R_+\, .
\endeq
Prior to \cite{rabin}, only perturbation techniques were available and the nonlinearity was assumed to be small in a suitable sense.
Rabinowitz proved that \eq{nonlinw} admits periodic solutions. In fact, there exist infinitely many periodic solutions of \eq{nonlinw}, having
``almost'' any possible period $T>0$: quite sophisticated tools are necessary to obtain a full description of the set of periodic solutions,
see e.g.\ \cite{gentile} and references therein. Periodic solutions were also found for the related nonlinear beam equation
\neweq{nonlinb}
\left\{\begin{array}{ll}
u_{tt}+u_{xxxx}+f(x,u)=0\quad & (x,t)\in(0,\pi)\times\R_+\, ,\\
u(0,t)=u(\pi,t)=u_{xx}(0,t)=u_{xx}(\pi,t)=0\quad & t\in\R_+\, ,
\end{array}\right.
\endeq
see \cite{lee,liu1,liu2}.
Also nonlocal (and nonlinear) variants of \eq{nonlinw} have been considered. In 1876 Kirchhoff \cite{kirch} introduced the following nonlinear vibrating
string equation
\neweq{nonlocw}
u_{tt}-\Big(a+b\int_0^\pi\!\!u_x^2\Big)u_{xx}=0\quad(x,t)\in(0,\pi)\times\R_+\, ,\qquad u(0,t)=u(\pi,t)=0\quad t\in\R_+\ ,
\endeq
where $a,b>0$. Note that \eq{nonlinw} with $f(x,u)=au+bu^3$ is quite similar to \eq{nonlocw} and, in some respect, \eq{nonlocw} appears more
complicated since first order derivatives are involved. Nevertheless, the modal analysis of \eq{nonlocw} is much simpler since it behaves as a ``hidden''
linear problem. Exploiting this peculiarity, Cazenave-Weissler \cite{cazw2} proved that \eq{nonlocw} has infinitely many periodic solutions.\par
If we take $\theta(x,0)=\theta_t(x,0)=0$ as initial data in \eq{caps1}-\eq{caps2}-\eq{bc}, then $\theta(x,t)\equiv0$, whereas $y$ solves the
nonlinear nonlocal equation \eq{nony}. A careful look at \eq{nony} shows that its structure, its nonlinearities, and its nonlocal terms are of the
same kind as those appearing in \eq{nonlinw}, \eq{nonlinb}, and \eq{nonlocw}.
It is therefore reasonable to conjecture that \eq{nony} admits infinitely many periodic-in-time solutions.\par
In order to find such solutions, or at least a reliable approximation of them, we proceed numerically.
The first step is to look for small periodic solutions of \eq{nony} close to those of the linearized problem
\neweq{linearized}
(M+2m\xi)y_{tt}+EIy_{xxxx}=2H_0\left(\frac{y_x}{\xi^2}\right)_x
-\frac{2AE}{L_c}\left[\int_0^L\frac{s''y}{\xi^3}\right]\frac{s''}{\xi^3}\, .
\endeq
Let us analyze \eq{linearized} and see what has to be expected from the numerical procedure.
Periodic solutions of \eq{linearized} may be obtained by separating variables.
Consider the linear operator ${\mathcal L}$ defined by
$${\mathcal L}u:=EI\, u''''-2H_0\left(\frac{u'}{\xi^2}\right)'+
\frac{2AE}{L_c}\left[\int_0^L\frac{s''u}{\xi^3}\right]\frac{s''}{\xi^3}\qquad\forall u\in C^4[0,L]\, .
$$
The definition of ${\mathcal L}$ may be continuously extended to the Sobolev space $H^2\cap H^1_0(0,L)$: the operator ${\mathcal L}$ maps this
space into its dual space $H'$ and is implicitly defined by
\neweq{implicit}
\langle{\mathcal L}u,v\rangle=
EI\int_0^L u''v''+2H_0\int_0^L\frac{u'v'}{\xi^2}+\frac{2AE}{L_c}\left[\int_0^L\frac{s''u}{\xi^3}\right]\left[\int_0^L\frac{s''v}{\xi^3}\right]\qquad
\forall u,v\in H^2\cap H^1_0(0,L)
\endeq
where $\langle\cdot,\cdot\rangle$ denotes the duality pairing between $H^2\cap H^1_0(0,L)$ and $H'$. The bilinear form
in \eq{implicit} is symmetric and coercive. Therefore, the operator ${\mathcal L}$ is self-adjoint and the eigenvalue problem
\neweq{eigen}
{\mathcal L}u=\lambda(M+2m\xi)u
\endeq
admits an unbounded sequence $\{\lambda_k\}$ of positive eigenvalues whose corresponding eigenfunctions form a complete system in $H^2\cap H^1_0(0,L)$.\par
If the cables were in the limit horizontal position, that is $s'=s''=0$ and $\xi=1$, then the eigenfunctions of \eq{eigen} would be $\sin(\frac{k\pi}{L}x)$
for all integer $k\ge1$ with corresponding eigenvalues given by
$$\lambda_k=\frac{EI\, k^4\pi^4+2H_0\, k^2\pi^2L^2}{(M+2m)L^4}\, .$$
As we could verify numerically, when the shape of the cable satisfies \eq{eqparabola}, the eigenfunctions of \eq{eigen} remain
close to $\sin(\frac{k\pi}{L}x)$ with $k\!-\!1$ zeros in $(0,L)$.\par
Denote by $\varphi_\lambda$ an eigenfunction of \eq{eigen} relative to the eigenvalue $\lambda$, then for any $A,B\in\R$ the function
\neweq{u}
u(x,t)=\varphi_\lambda(x)\left(A\sin\big(\sqrt{\lambda}\, t\big)+B\cos\big(\sqrt{\lambda}\, t\big)\right)
\endeq
is a periodic solution of \eq{linearized}.

We numerically find approximate periodic solutions for \eq{nony} in two steps. First, we transform the PDE into a system of
ODEs by discretising in space via Fourier polynomials and we build a Mathematica program that solves the corresponding initial value problems.
Then we look for points in the (finite dimensional) phase space that belong to a periodic orbit.\par
More precisely, given the boundary conditions \eq{bc}, we look for an approximate solution
of \eq{nony} that can be represented by a trigonometric polynomial
\neweq{trigpol}
y(x,t)=\sum_{k=1}^n y_k(t)\sin\left(\frac{k \pi x}{L}\right)\,.
\endeq
To achieve this goal, we plug \eq{trigpol} into \eq{nony} and we project it onto the span of
$\{\sin\left(\pi x/L\right),\ldots,\sin\left(n\pi x/L\right)\}$: in particular, we choose initial conditions
$$y(x,0)=\sum_{k=1}^n y_{0k}\sin\left(\frac{k \pi x}{L}\right)\,,\quad y_t(x,0)=\sum_{k=1}^n y_{1k}\sin\left(\frac{k \pi x}{L}\right)$$
for some $Y_0=\{y_{0k}\}_{k=1,\ldots,n}\in\R^n$ and $Y_1=\{y_{1k}\}_{k=1,\ldots,n}\in\R^n$.
The dimension $n$ is chosen in such a way that the result of the numerical integration does not change significantly when $n$ is increased.
For the computation of the first $6$ periodic solutions we take $n=10$, while for the computation of the next $4$ periodic solutions we take $n=16$.

Let $Y(t)=\{y_{k}(t)\}_{k=1,\ldots,n}\in\R^n$; we obtain a system of $n$ nonlinear ODEs in the form
\neweq{systode}
\ddot{Y}(t)=G\big(Y(t)\big)\qquad(G:\R^n\to\R^n)
\endeq
with initial conditions $Y(0)=Y_0$ and $\dot{Y}(0)=Y_1$. We use Newton's algorithm to find initial vectors $Y_0$ and $Y_1$ that lead to periodic
solutions of \eq{systode}. For any $T>0$ we define the transfer map
$$\Phi_T:\R^{2n}\to\R^{2n}\ ,\qquad\Phi_T(Y_0,Y_1)=\big(Y(T),\dot Y(T)\big)\, ,$$
where $Y(t)$ is the solution of \eq{systode} with initial conditions $(Y_0,Y_1)$, and we look for fixed points of $\Phi_T$ by iterating the transformation
$$
N(Y_0,Y_1):=(Y_0,Y_1)-[J\Phi_T(Y_0,Y_1)-I]^{-1}(\Phi_T(Y_0,Y_1)-(Y_0,Y_1))
$$
where $J\Phi_T$ is the Jacobian matrix of $\Phi_T$ and $I$ is the identity $2n\times2n$ matrix.
As initial point for the Newton's algorithm we choose $Y_0=\alpha e_k$ and $Y_1=0$, where $\alpha$ is some positive number,
$k\in\{1,\ldots,n\}$ and $\{e_1,...,e_n\}$ is the canonical basis in $\mathbb{R}^n$. We observe that, if $\alpha$ is sufficiently small,
there exists $T>0$ such that $\Phi_T(Y_0,Y_1)\simeq(Y_0,Y_1)$: for such $T$, Newton's algorithm converges rapidly.
In Table \ref{almostlinear} we quote the period $T$ of each mode obtained for $\alpha\to0$.
\begin{table}[htdp]
\begin{center}
{\small
\begin{tabular}{|c|c|c|c|c|c|c|c|c|c|c|}
\hline
Mode & 1 & 2 & 3 & 4 & 5 & 6 & 7 & 8 & 9 & 10 \\
\hline
Period ($s$) & 10.95 & 7.67 & 5.42 & 3.75 & 2.9 & 2.41 & 2.02 & 1.72 & 1.5 & 1.32 \\
\hline
\end{tabular}
}
\end{center}
\caption{Period of the longitudinal mode on each branch for energies close to 0.}\label{almostlinear}
\end{table}

Figure \ref{graph8} represents the graph of the solution obtained by this algorithm for $k=8$ at time $t=0$;
as expected, it resembles a multiple of $\sin(\frac{8\pi}{L}x)$ (with $L=853.44$m, see Section \ref{constants}).
Since this function has small $L^\infty$-norm, we expect it to be an approximate periodic solution of the linear equation \eq{linearized}.
\begin{figure}[ht]
\begin{center}
{\includegraphics[height=40mm, width=100mm]{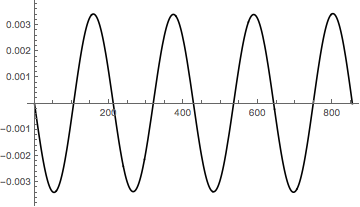}}
\caption{$y(x,0)$ for the solution on the 8th branch at $T=1.72$ (small energy).}\label{graph8}
\end{center}
\end{figure}

This small periodic solution of the nonlinear system \eq{systode} is an example of what we expect to observe when a still bridge is slightly perturbed by some external forcing, typically the wind. If more energy is inserted
into the structure, the oscillation becomes wider. Since the insertion of energy is gradual, it seems natural to look for other solutions, of larger energy and $L^\infty$-norm, by
continuity. To do so, we slightly increase $T$ and use again Newton's algorithm: it turns out that, on each branch, the period is an increasing function
of the energy and, if the increment in $T$ is sufficiently small, Newton's algorithm keeps converging rapidly, and the resulting solution of \eq{systode}
has larger energy and $L^\infty$-norm. By increasing $T$ we build the branch of the $k$-th nonlinear modes of \eq{systode}.
This procedure cannot be extended indefinitely; at some point the convergence of Newton's algorithm starts deteriorating and smaller increments in $T$
must be taken in order to keep following the branch of solutions. We stop the procedure when the acceptable increments become too small to be feasible.
In this way we obtain all the branches of the $k$-th nonlinear longitudinal modes for $k=1,\ldots,10$.

\begin{definition}\label{nnllmm}
We call a periodic solution of \eqref{systode} obtained with this algorithm a {\bf $k$-th nonlinear longitudinal mode of period $T$}.
\end{definition}

We observed that the nonlinear longitudinal modes on the odd branches are symmetric with respect to the center of the bridge, see e.g.\ Figure \ref{7}
where the solution on the 7th branch at $T=2.18$ is displayed.
On the other hand, the modes on even branches have no symmetries, see e.g.\ Figure \ref{8} where the solution on the 8th branch at $T=1.86$ is displayed.
\begin{figure}[ht]
\begin{center}
{\includegraphics[height=45mm, width=100mm]{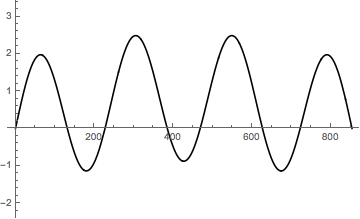}}
\caption{$y(x,0)$ for the solution on the 7th branch at $T=2.18$ (critical threshold of instability).}\label{7}

\vskip7mm
{\includegraphics[height=40mm, width=100mm]{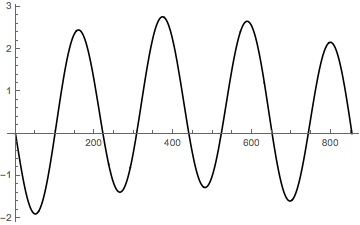}}
\caption{$y(x,0)$ for the solution on the 8th branch at $T=1.86$ (critical threshold of instability).}\label{8}
\end{center}
\end{figure}

We remark that the $k$-th Fourier component of a $k$-th nonlinear longitudinal mode is much larger than the other ones, see e.g.\ Figure \ref{8thmode}
where the evolution in time of the first 12 Fourier components for the solution obtained with $k=8$ with $T=1.86$ is displayed.

In all our experiments we also observed that the first Fourier component of the periodic solution (namely the function $y_1(t)$ in \eq{trigpol})
is negative for all times, see e.g.\ the first plot in Figure \ref{8thmode}. This means that the deck is bent upwards during periodic motions.
In fact, our results show that, when the periodic motion starts, the cables undergo an additional tension due to the nonlocal term and therefore
they pull the deck upwards.
\begin{figure}[htdp]
\begin{center}
{\includegraphics[height=35mm, width=75mm]{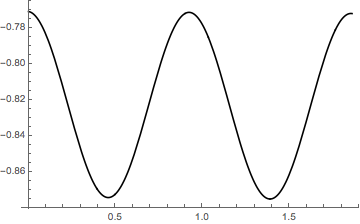}}
{\includegraphics[height=35mm, width=75mm]{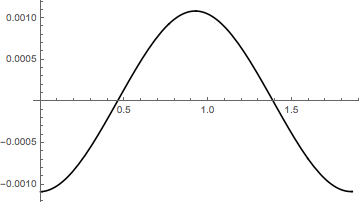}}\\
{\includegraphics[height=35mm, width=75mm]{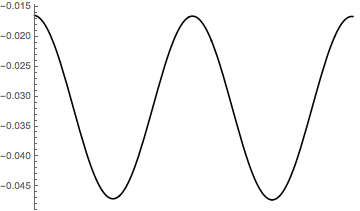}}
{\includegraphics[height=35mm, width=75mm]{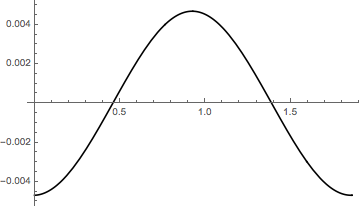}}\\
{\includegraphics[height=35mm, width=75mm]{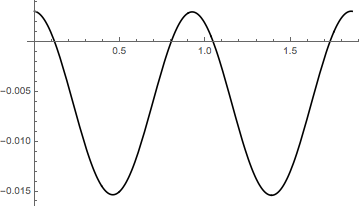}}
{\includegraphics[height=35mm, width=75mm]{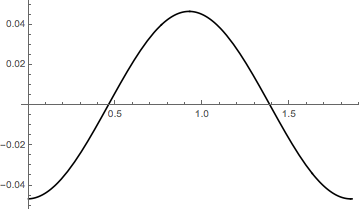}}\\
{\includegraphics[height=35mm, width=75mm]{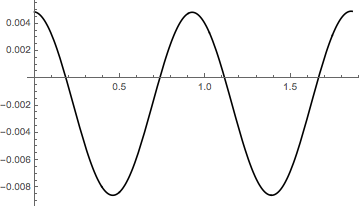}}
{\includegraphics[height=35mm, width=75mm]{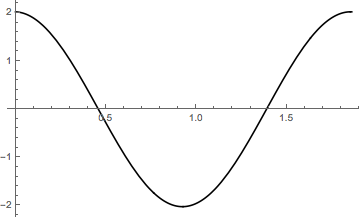}}\\
{\includegraphics[height=35mm, width=75mm]{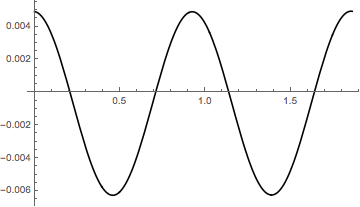}}
{\includegraphics[height=35mm, width=75mm]{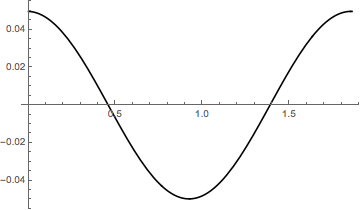}}\\
{\includegraphics[height=35mm, width=75mm]{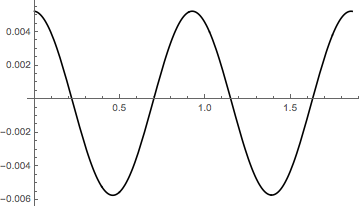}}
{\includegraphics[height=35mm, width=75mm]{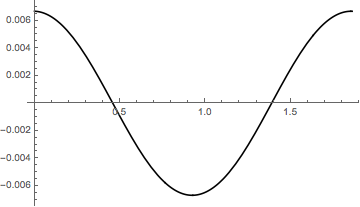}}
\caption{Time evolution of the Fourier components 1 to 12 of the solution on the 8th branch for $T=1.86$ (critical threshold of instability).}\label{8thmode}
\end{center}
\end{figure}
\vfill\eject

\section{Torsional stability of longitudinal modes}

\subsection{Theoretical framework}\label{theores}

The purpose of this section is to insert the stability analysis of suspension bridges within a suitable theoretical framework.
We explain why,
for small longitudinal oscillations, one expects the system \eq{caps1}-\eq{caps2} to remain torsionally stable, and why, for large longitudinal
oscillations, the system becomes unstable. Most of the tools introduced in this section are applications to our context of classical tools
from the theory of ODE's that can be found e.g.\ in \cite{chicone,yakubovich}.\par
We consider a (time) periodic solution $y$ of \eq{nony} and we denote by $T_y$ its period.
Then we view the equation \eq{caps2} for $\theta$ as an independent equation, that is, a linear hyperbolic equation of the form
\neweq{theta1}
\alpha(x)\theta_{tt}=\Big(a(x,t)\theta_x\Big)_x+\beta(x)\left[\int_0^Lb(x,t)\theta(x,t)dx\right]+c(x,t)\left[\int_0^L\gamma(x)\theta(x,t)dx\right]
\endeq
for suitable coefficients $a,b,c,\alpha,\beta,\gamma$; note that the coefficients $a,b,c$ depend on the periodic solution $y=y(x,t)$ of \eq{nony}.

Because of energy conservation, if the energy of the system is at some time concentrated on low Fourier coefficients of the solution, e.g.\ if the initial
condition is smooth, then it remains so for all times. Hence, it seems sensible to simplify the problem by studying its projection onto the
space spanned by
$\{\sin(\frac{\pi}{L}x),...,\sin(\frac{\nu\pi}{L}x)\}$, for some integer $\nu\ge2$. This integer $\nu$ needs not
(so che a volte si usa, ma non \`e corretto. "does not need" \`e meglio)
to be the same as the integer $n$
in Section \ref{nlm}: in order to state and prove a sufficient condition for stability we simply take $\nu=2$ whereas for our numerical experiments
we take $\nu=n$.\par
For each $k=1,...,\nu$ we multiply \eq{theta1} by $\sin(\frac{k\pi}{L}x)$
and we integrate over $(0,L)$: we obtain $\nu$ ODEs in the form
\neweq{theta3}
\ddot{\theta}_k(t)+\sum_{j=1}^\nu \chi_{jk}(t)\theta_j(t)=0\,,\qquad(k=1,...,\nu)
\endeq
for some coefficients $\chi_{ij}$ depending only on $t$. We take the solution of \eq{theta3}, that is,
\neweq{O}
\theta(x,t)=\sum_{j=1}^\nu \theta_j(t)\sin\left(\frac{j\pi}{L}x\right)
\endeq
as an approximate solution of  \eq{theta1}. Since the time-dependent coefficients in \eq{theta1} are computed in terms of the periodic solution $y$
of \eq{nony}, we have that
\neweq{trueperiod}
\mbox{the functions}\quad t\mapsto\chi_{jk}(t)\quad\mbox{are }T_y\mbox{\,-\,periodic }\forall j,k=1,...,\nu\, .
\endeq
Moreover, these coefficients are symmetric: $\chi_{ij}(t)\equiv\chi_{ji}(t)$ for all $i,j$. We define the symmetric $\nu\times\nu$ matrix
$$
{\bf \Xi}(t)\ =\ \Big(\chi_{jk}(t)\Big)_{j,k=1,...,\nu}\, ,
$$
which is $T_y$\,-\,periodic by \eq{trueperiod}. Then we rewrite \eq{theta3} in the vectorial form
\neweq{theta5}
\ddot{W}+{\bf \Xi}(t)W=0\,,
\endeq
where $W(t)=(\theta_1(t),...,\theta_\nu(t))$ is the finite dimensional approximate vector containing the torsional components of the motion.
Following \cite[p.155, vol.1]{yakubovich} we give the

\begin{definition}
We say that the system \eqref{theta5} is {\bf strongly stable} if all its solutions are bounded in $\R$ and
this property is preserved under small perturbations of ${\bf \Xi}(t)$.
\end{definition}

In turn, since the system \eq{theta5} has coefficients which depend on the particular nonlinear longitudinal mode used to build the
symmetric matrix $\Xi(t)$, we may characterize the stability of the modes as follows.

\begin{definition}
We say that a nonlinear longitudinal mode of \eqref{systode} (see Definition \ref{nnllmm}) is {\bf torsionally stable} if the system \eqref{theta5}
is strongly stable; otherwise, we say that it is {\bf torsionally unstable}.
\end{definition}

Clearly, the boundedness of all solutions of the linear system \eq{theta5} is equivalent to the stability of the trivial solution $W(t)\equiv0$ and,
therefore, to the torsional stability of the nonlinear longitudinal mode.
Our purpose is precisely to study the stability of \eq{theta5}. In Section \ref{nu22} we state and prove a sufficient condition for
the stability in the case $\nu=2$ and we discuss how, for general systems such as \eq{theta5}, instability may arise.
In Section \ref{numstab} we compute numerically the thresholds of instability in dependence of the behavior of the nonlinear longitudinal mode
which is used to built the coefficient matrix $\Xi$ in \eq{theta5}.

\subsection{The case $\nu=2$}\label{nu22}

A fairly useful tool to prove the stability for linear systems such as \eq{theta5} is the following criterion,
see \cite[Theorem II p.270, vol.1]{yakubovich}:

\begin{lemma}\label{suff}
Assume that a $\nu\times \nu$ matrix ${\bf P}(t)$ is continuous and $T$-periodic for some $T>0$ and write ${\bf P}(t)={\bf D}(t)+{\bf R}(t)$ where
${\bf D}(t)$ is the diagonal matrix containing the same elements as ${\bf P}(t)$ and ${\bf R}(t)={\bf P}(t)-{\bf D}(t)$ is the ``residual matrix''
containing all the coefficients off the diagonal of ${\bf P}(t)$. Let $r_1$ and $r_2$ be two continuous $T$-periodic functions such that
$$r_1(t){\bf I}\le{\bf R}(t)\le r_2(t){\bf I}\qquad\forall t\in[0,T]\, .$$
Let $\delta_k(t)$ ($k=1,...,\nu$) be the elements of the diagonal matrix ${\bf D}(t)$. Assume that for all $s\in[0,1]$ the scalar Hill equations
\neweq{tantiscalari}
\ddot{z}_k+\Big(\delta_k(t)+sr_1(t)+(1-s)r_2(t)\Big)z_k(t)=0\qquad(k=1,...,\nu)
\endeq
are strongly stable. Then the system $\ddot{Z}+{\bf P}(t)Z=0$ is strongly stable (here, $Z=Z(t)\in\R^\nu$).
\end{lemma}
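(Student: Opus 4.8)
\noindent The plan is to recognise this criterion as an instance of the Krein theory of strong stability for periodic linear \emph{Hamiltonian} systems and to prove it by a monotone-comparison argument following \cite[vol.\ 1]{yakubovich}; below I sketch only the skeleton. The first move is a reduction to diagonal comparison systems. From $r_1(t){\bf I}\le{\bf R}(t)\le r_2(t){\bf I}$ one gets $r_1\le r_2$ pointwise and
$$
{\bf A}_1(t):={\bf D}(t)+r_1(t){\bf I}\ \le\ {\bf P}(t)\ \le\ {\bf D}(t)+r_2(t){\bf I}=:{\bf A}_0(t)\qquad\forall\,t\in[0,T]
$$
in the sense of symmetric matrices. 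For $s\in[0,1]$ set ${\bf A}_s(t):={\bf D}(t)+\big(sr_1(t)+(1-s)r_2(t)\big){\bf I}$: this is a diagonal, hence completely decoupled, family whose $k$-th scalar component is precisely the Hill equation \eqref{tantiscalari}, so the hypothesis says exactly that $\ddot Z+{\bf A}_s(t)Z=0$ is strongly stable for every $s\in[0,1]$, while $s\mapsto{\bf A}_s$ is a continuous monotone path joining ${\bf A}_0$ (at $s=0$) to ${\bf A}_1$ (at $s=1$).

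Next I would recall the classical facts that make the comparison work (Krein theory of strongly stable canonical systems, see \cite[vol.\ 1]{yakubovich}). Any system $\ddot Z+{\bf Q}(t)Z=0$ with ${\bf Q}$ symmetric and $T$-periodic is equivalent to the canonical system $\dot u=J\,\mathrm{diag}\big({\bf Q}(t),{\bf I}\big)\,u$ with $u=(Z,\dot Z)$; for such systems the set of strongly stable ones is open, on it all Floquet multipliers lie on the unit circle, are simple, and carry a definite Krein sign, and a connected component of this set is pinned down by the resulting signature data. Moreover, increasing ${\bf Q}$ as a quadratic form makes the multipliers of positive Krein type rotate one way along the circle and those of negative type the other way, so a multiplier can leave the circle only through a collision of two multipliers of opposite sign (at $+1$, at $-1$, or at a conjugate pair $e^{\pm i\alpha}$), an event that changes the signature data. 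The one genuine lemma I would take from \cite{yakubovich} is the consequence of this picture: a continuous monotone path whose two endpoints are strongly stable and lie \emph{in the same connected component} of the strong-stability set remains entirely inside that set. (In the scalar Hill case this is just the familiar fact that the stability zones are intervals of the monotone parameter, so one cannot leave a zone and re-enter the same one; the matricial version is its Krein-theoretic counterpart.)

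With these in hand the conclusion is short. The path $s\mapsto{\bf A}_s$ lies entirely in the strong-stability set by hypothesis, hence ${\bf A}_0$ and ${\bf A}_1$ belong to the same connected component. Concatenating the straight segments $\tau\mapsto(1-\tau){\bf A}_1+\tau{\bf P}$ and $\tau\mapsto(1-\tau){\bf P}+\tau{\bf A}_0$ --- each monotone nondecreasing by the sandwich inequality above --- produces a continuous monotone path from ${\bf A}_1$ to ${\bf A}_0$ which passes through ${\bf P}$. Its endpoints are strongly stable and lie in the same component, so by the lemma every system on this path, and in particular $\ddot Z+{\bf P}(t)Z=0$, is strongly stable; for ${\bf P}={\bf \Xi}$ this is the assertion about \eqref{theta5}.

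The reduction of the first step and the concatenation just used are routine; the whole weight of the argument sits on the spectral input recalled above --- the precise monotone motion of the Floquet multipliers on the unit circle and the principle that loss of strong stability is always announced by a Krein collision that alters the signature invariants. This is exactly where the Hamiltonian structure of \eqref{theta5}, i.e.\ the symmetry $\chi_{ij}\equiv\chi_{ji}$ of ${\bf \Xi}(t)$, is indispensable, and it is the part I would take verbatim from \cite{yakubovich} rather than reprove.
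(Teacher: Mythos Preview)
The paper does not prove this lemma at all: it is quoted verbatim as \cite[Theorem II p.270, vol.~1]{yakubovich} and used as a black box. So there is no ``paper's own proof'' to compare against; what you have written is a sketch of the argument one would extract from that reference.

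Your outline is faithful to the Yakubovich--Starzhinskii approach and the architecture is correct: sandwich ${\bf P}$ between the diagonal extremes ${\bf A}_1\le{\bf P}\le{\bf A}_0$, observe that the hypothesis on the scalar Hill equations \eqref{tantiscalari} says exactly that the decoupled systems $\ddot Z+{\bf A}_s(t)Z=0$ are strongly stable for all $s\in[0,1]$, conclude that ${\bf A}_0$ and ${\bf A}_1$ lie in the same connected component of the strong-stability region, and then run a monotone homotopy from ${\bf A}_1$ to ${\bf A}_0$ through ${\bf P}$. The concatenated path you build is indeed monotone nondecreasing, and the ``genuine lemma'' you invoke --- that a monotone path whose endpoints are strongly stable and in the same component stays entirely in the strong-stability set --- is precisely the Krein-theoretic content carrying the proof; you are right to flag it as the one non-elementary input and to take it from \cite{yakubovich} rather than reprove it.

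Two small remarks. First, you correctly note at the end that the symmetry of ${\bf P}(t)$ is essential for the Hamiltonian/canonical structure on which Krein theory rests; the lemma as stated in the paper does not make this hypothesis explicit, but it is implicit in the matrix ordering $r_1{\bf I}\le{\bf R}\le r_2{\bf I}$ and is satisfied in the application since $\chi_{ij}\equiv\chi_{ji}$. Second, your sketch is honest about its own status: the reduction and concatenation are routine, and the spectral monotonicity of Floquet multipliers under increase of the potential together with the Krein-collision criterion for loss of strong stability is exactly where the depth lies. Since the paper itself simply cites the result, your write-up already goes further than the paper does.
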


In this section we study in detail the case $\nu=2$, since this case is simpler to handle and allows to prove a sufficient condition for the stability.
Moreover, according to the detailed analysis of the TNB collapse by Smith-Vincent \cite[p.21]{tac2},
{\em the only torsional mode which developed under wind action on the bridge or on the model is that with a single node at the center of the main span},
see also \cite{book} for further evidence showing the importance of the second torsional component.\par
If $\nu=2$ then \eq{theta5} simply becomes
\neweq{theta6}
\left(\begin{array}{c}
\!\ddot{\theta}_1\! \\
\!\ddot{\theta}_2\!
\end{array}\right)+
\left(\begin{array}{cc}
\chi_{11}(t) & 0 \\
0 & \chi_{22}(t)
\end{array}\right)\!
\left(\begin{array}{c}
\!\theta_1\! \\
\!\theta_2\!
\end{array}\right)+
\left(\begin{array}{cc}
0 & \chi_{12}(t) \\
\chi_{12}(t) & 0
\end{array}\right)\!
\left(\begin{array}{c}
\!\theta_1\! \\
\!\theta_2\!
\end{array}\right)=\left(\begin{array}{c}
\!0\! \\
\!0\!
\end{array}\right)
\endeq
where we have emphasized the diagonal matrix ${\bf D}(t)$ and the residual matrix ${\bf R}(t)$. We point out that a simple and elegant form such as
\eq{theta6} is made possible by the fact that $\nu=2$: the eigenvalues of the residual (symmetric) matrix ${\bf R}(t)$ are $\pm\chi_{12}(t)$ which means that
\neweq{eigenroots}
-|\chi_{12}(t)|{\bf I}\le{\bf R}(t)\le|\chi_{12}(t)|{\bf I}\qquad\forall t\in[0,T_y]\, .
\endeq

By \eq{eigenroots}, in the situation under study the $\nu=2$ families of Hill equations in \eq{tantiscalari} read
\neweq{twofamilies}
\begin{array}{cc}
\ddot{\theta}_1+\Big(\chi_{11}(t)+\alpha\chi_{12}(t)\Big)\theta_1=0\\
\ddot{\theta}_2+\Big(\chi_{22}(t)+\alpha\chi_{12}(t)\Big)\theta_2=0
\end{array}\qquad\qquad(-1\le\alpha\le1)\, .
\endeq

Let us derive \eq{twofamilies} in the limit case where $y=0$.

\begin{lemma}\label{limit}
There exist $\gamma_1,\gamma_2>0$ such that, if $y\equiv0$, then the function
\neweq{nu2}
\theta(x,t)=\theta_1(t)\sin\left(\frac{\pi}{L}x\right)+\theta_2(t)\sin\left(\frac{2\pi}{L}x\right)
\endeq
satisfies the projection of \eqref{caps2} onto $\mbox{\rm span}\{\sin(\frac{\pi}{L}x),\sin(\frac{2\pi}{L}x)\}$ with $\theta_1$ and $\theta_2$ solving
\neweq{limits}
\ddot{\theta}_1+\gamma_1\theta_1=0\, ,\quad\ddot{\theta}_2+\gamma_2\theta_2=0\, .
\endeq
\end{lemma}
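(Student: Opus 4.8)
\medskip\noindent\emph{Proof (plan).}
The plan is to reduce \eqref{caps2} at $y\equiv0$ to a weighted, self-adjoint, nonlocal wave equation for $\theta$ alone, to project it onto $\mathrm{span}\{\sin(\tfrac{\pi}{L}x),\sin(\tfrac{2\pi}{L}x)\}$, and then to use the symmetry of $s$ about $x=L/2$ to see that the resulting $2\times2$ system decouples. Setting $y\equiv0$ (hence $y_x\equiv0$) in \eqref{caps2}, all the terms carrying $y$ or $y_x$ disappear and, using the elementary identity $(s'/\xi)'=s''/\xi^3$ (which follows from $\xi'=s's''/\xi$ together with $\xi^2-(s')^2=1$), one is left with
$$\Big(\tfrac{M}{3}+2m\xi\Big)\theta_{tt}=\frac{GK}{\ell^2}\theta_{xx}+2H_0\Big(\frac{\theta_x}{\xi^2}\Big)_x-\frac{2AE}{L_c}\left[\int_0^L\frac{s''\theta}{\xi^3}\right]\frac{s''}{\xi^3}\,,$$
that is $\rho(x)\theta_{tt}+\mathcal{M}\theta=0$ with $\rho(x):=\tfrac{M}{3}+2m\xi(x)>0$ and $\mathcal{M}\theta:=-\frac{GK}{\ell^2}\theta''-2H_0(\theta'/\xi^2)'+\frac{2AE}{L_c}\big[\int_0^L\frac{s''\theta}{\xi^3}\big]\frac{s''}{\xi^3}$. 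Exactly as done in the excerpt for $\mathcal{L}$, the operator $\mathcal{M}$ is self-adjoint on $L^2(0,L)$, being associated on $H^1_0(0,L)$ with the symmetric, coercive bilinear form $a(u,v)=\frac{GK}{\ell^2}\int_0^L u'v'+2H_0\int_0^L\frac{u'v'}{\xi^2}+\frac{2AE}{L_c}\big[\int_0^L\frac{s''u}{\xi^3}\big]\big[\int_0^L\frac{s''v}{\xi^3}\big]$.

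Next I would insert the ansatz \eqref{nu2} into $\rho\theta_{tt}+\mathcal{M}\theta=0$ and project onto $\sin(\tfrac{k\pi}{L}x)$, $k=1,2$, obtaining
$$\sum_{j=1}^2\mu_{jk}\,\ddot{\theta}_j+\sum_{j=1}^2\kappa_{jk}\,\theta_j=0\qquad(k=1,2),$$
with $\mu_{jk}=\int_0^L\rho\,\sin(\tfrac{j\pi}{L}x)\sin(\tfrac{k\pi}{L}x)\,dx$ (symmetric) and $\kappa_{jk}=a\big(\sin(\tfrac{j\pi}{L}\cdot),\sin(\tfrac{k\pi}{L}\cdot)\big)$ (symmetric, since $a$ is). It then suffices to prove that $\mu_{12}=\kappa_{12}=0$: the system would then read $\mu_{kk}\ddot{\theta}_k+\kappa_{kk}\theta_k=0$, that is \eqref{limits} with $\gamma_k:=\kappa_{kk}/\mu_{kk}$; moreover $\gamma_k>0$, because $\mu_{kk}=\int_0^L\rho\,\sin^2(\tfrac{k\pi}{L}x)\,dx>0$ (as $\rho>0$) and $\kappa_{kk}=a\big(\sin(\tfrac{k\pi}{L}\cdot),\sin(\tfrac{k\pi}{L}\cdot)\big)>0$ (by coercivity of $a$).

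The vanishing of the off-diagonal entries is where the geometry of the cable enters. By \cite[Proposition~1]{agmjm} the solution $s$ of \eqref{eqparabola} is symmetric about $x=L/2$, hence $\xi$, $s''$ and $\rho$ are even about $x=L/2$ while $s'$ is odd; moreover $\sin(\tfrac{\pi}{L}x)$ is even and $\sin(\tfrac{2\pi}{L}x)$ is odd about $x=L/2$, with derivatives respectively odd and even. Therefore the integrand of $\mu_{12}$ is odd about $x=L/2$, so $\mu_{12}=0$; and in $\kappa_{12}$ the first two integrands are products of $\cos(\tfrac{\pi}{L}x)$ (odd) and $\cos(\tfrac{2\pi}{L}x)$ (even) with even factors, hence odd, while the nonlocal term contains the factor $\int_0^L\frac{s''}{\xi^3}\sin(\tfrac{2\pi}{L}x)\,dx=0$ (odd integrand), so $\kappa_{12}=0$ and the lemma follows. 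The only points that require some care are the bookkeeping of which terms of \eqref{caps2} survive at $y\equiv0$ — together with the simplification of the surviving cable term via $(s'/\xi)'=s''/\xi^3$ — and the parity check of the nonlocal contribution to $\kappa_{12}$; both are routine, while the self-adjointness and coercivity of $\mathcal{M}$ are verbatim copies of the argument already given for $\mathcal{L}$ in the excerpt.
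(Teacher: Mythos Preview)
Your proof is correct and follows essentially the same route as the paper: set $y\equiv0$ in \eqref{caps2}, simplify the surviving cable term via $(s'/\xi)'=s''/\xi^3$, and then exploit the even/odd parities about $x=L/2$ of $s'',\xi,s'$ and of $\sin(\tfrac{k\pi}{L}x),\cos(\tfrac{k\pi}{L}x)$ to kill the off-diagonal integrals. The only cosmetic difference is that you package the right-hand side as a self-adjoint operator $\mathcal{M}$ with coercive bilinear form $a$ and deduce $\gamma_k>0$ from coercivity, whereas the paper writes out the explicit formulas for $\gamma_1,\gamma_2$ and reads positivity off directly.
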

\begin{proof} If $y\equiv0$, then \eq{caps2} reads
\neweq{zerozero}
\left(\frac{M}{3}+2m\xi\right)\ell^2\theta_{tt}=GK\theta_{xx}+2\ell^2\bigg\{H_0\left(\frac{\theta_x}{\xi^2}\right)_x
-\frac{AE}{L_c}\left[\int_0^L\frac{s''\theta}{\xi^3}\right]\frac{s''}{\xi^3}\bigg\}
\endeq
and all the coefficients are now independent of $t$. The functions involved in this equation are either symmetric or skew-symmetric with respect
to $x=\frac{L}{2}$:
\neweq{skew}
s''(\!x\!),\xi(\!x\!),\sin(\!\tfrac{\pi}{L}x\!),\cos(\!\tfrac{2\pi}{L}x\!)\mbox{ are symmetric,}\quad
s'(\!x\!),\sin(\!\tfrac{2\pi}{L}x\!),\cos(\!\tfrac{\pi}{L}x\!)\mbox{ are skew-symmetric.}
\endeq
Take $\theta$ in the form \eq{nu2} and plug it into \eq{zerozero}; then multiply \eq{zerozero} by $\sin(\!\tfrac{\pi}{L}x\!)$ and integrate over
$(0,L)$. Several terms cancel due to \eq{skew}: if an odd number of skew-symmetric functions is involved then the integral over $(0,L)$
vanishes. We end up with the first equation in \eq{limits} with
$$\gamma_1=\frac{1}{A}\left(\frac{GK\pi^2}{2L}+\frac{2\pi^2\ell^2H_0}{L^2}\int_0^L\frac{\cos^2(\tfrac{\pi}{L}x)}{\sr^2}dx+
\frac{2AE\ell^2}{L_c}\left[\int_0^L\frac{s''(x)\sin(\tfrac{\pi}{L}x)}{\sr^3}dx\right]^2\right)>0$$
and
$$A=\ell^2\int_0^L\left(\frac{M}{3}+2m\sr\right)\sin^2\left(\frac{\pi}{L}x\right)\, dx\, .$$

Then multiply \eq{zerozero} by $\sin(\!\tfrac{2\pi}{L}x\!)$ and integrate over $(0,L)$. By using again \eq{skew}, we find the second equation in
\eq{limits} with
$$\gamma_2=\frac{1}{B}\left(\frac{2GK\pi^2}{L}+\frac{8\pi^2\ell^2H_0}{L^2}\int_0^L\frac{\cos^2(\tfrac{2\pi}{L}x)}{\sr^2}dx\right)>0\, ,\quad
B=\ell^2\int_0^L\left(\frac{M}{3}+2m\sr\right)\sin^2\left(\frac{2\pi}{L}x\right)\, dx\, .$$
The proof is complete.
\end{proof}

Needless to say, the equations \eq{limits} admit periodic (bounded!) solutions $\theta_1$ and $\theta_2$ for any initial data.
We denote by $\mathbb{N}$ the set of nonnegative integers and we assume that
\neweq{assumption0}
\frac{T_0}{\pi}\sqrt{\gamma_k}\not\in\mathbb{N}\qquad\mbox{for }k=1,2
\endeq
where $T_0=\lim_{\|y\|_\infty\to0}T_y$ is the period of the corresponding periodic solution of the linear problem.
Clearly, \eq{assumption0} has probability 1 to occur among all possible choices of $(G,K,M,T_0,L,\ell)\in\R^6_+$.\par
Let $T_y$ be as in \eq{trueperiod}; if \eq{assumption0} holds, we know that for $k=1,2$ there exists $n_k\in\mathbb{N}$ such that
\neweq{boh}
\frac{n_k^2\pi^2}{T_0^2}<\gamma_k<\frac{(n_k+1)^2\pi^2}{T_0^2}\, .
\endeq
If $y\ne0$ and $y,y_x,y_{xx}\to0$ uniformly with respect to $t$, then
$$\chi_{kk}(t)\to\gamma_k\quad(k=1,2)\, ,\qquad\chi_{12}(t)\to0\qquad\mbox{uniformly}$$
and the system \eq{twofamilies} converges to \eq{limits}, see Lemma \ref{limit}.
Therefore, by \eq{boh} and by continuity, we infer that there exists $\varepsilon>0$ such that if $\|y\|_\infty<\varepsilon$ then
$$
\frac{n_k^2\pi^2}{T_y^2}\leq \chi_{kk}(t)-|\chi_{12}(t)|\leq \chi_{kk}(t)+|\chi_{12}(t)|\leq \frac{(n_k+1)^2\pi^2}{T_y^2}
$$
for $k=1,2$ and for all $t\ge0$. Then a stability criterion for the Hill equation due to Zhukovskii \cite{zhk}, see also
\cite[Test 1 p.697, vol.2]{yakubovich}, states that the two equations in \eq{twofamilies} are strongly stable.
In turn, Lemma \ref{suff} implies that the system \eq{theta6} is strongly stable. We have so proved the following statement.

\begin{theorem}\label{conclusion}
Assume \eqref{assumption0} and let $y$ be a nonlinear longitudinal mode of \eqref{systode}. There exists $\varepsilon>0$ such that if
$\|y\|_\infty<\varepsilon$, then the projection \eqref{theta5} of \eqref{caps2} onto $\mbox{\rm span}\{\sin(\frac{\pi}{L}x),\sin(\frac{2\pi}{L}x)\}$
is strongly stable; therefore, $y$ is torsionally stable.
\end{theorem}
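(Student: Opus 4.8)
The plan is to reduce the strong stability of the $2\times2$ periodic system \eqref{theta6} to that of two scalar Hill equations by means of Lemma \ref{suff}, and then to establish the latter by a classical non-resonance test for Hill's equation, exploiting the explicit $y\equiv0$ limit provided by Lemma \ref{limit}.

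First I would make precise the ``smallness'' of the coefficients. A $k$-th nonlinear longitudinal mode $y$ is, by construction, a trigonometric polynomial \eqref{trigpol} lying in a fixed finite-dimensional space, so that $\|y_x\|_{L^\infty}$ and $\|y_{xx}\|_{L^\infty}$ are bounded, uniformly in $t$, by a constant (depending only on $n$) times $\|y\|_\infty$; moreover, along the branch issuing from the linear mode \eqref{u} one has $T_y\to T_0$ as $\|y\|_\infty\to0$. Next I would note that the coefficients $\chi_{jk}(t)$ obtained by projecting \eqref{caps2} onto $\mbox{\rm span}\{\sin(\tfrac{\pi}{L}x),\sin(\tfrac{2\pi}{L}x)\}$ are given by integral formulas depending continuously on $y,y_x,y_{xx}$, and that when $y\equiv0$ they collapse to the constants $\chi_{11}=\gamma_1$, $\chi_{22}=\gamma_2$, $\chi_{12}=0$ of Lemma \ref{limit}; hence $\chi_{kk}(t)\to\gamma_k$ and $\chi_{12}(t)\to0$ uniformly in $t$ as $\|y\|_\infty\to0$.

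Then assumption \eqref{assumption0} enters: for $k=1,2$ it forces $\gamma_k$ to lie strictly between the consecutive values $n_k^2\pi^2/T_0^2$ and $(n_k+1)^2\pi^2/T_0^2$, that is, \eqref{boh}. Combining this with the uniform convergence above and with $T_y\to T_0$, I would fix $\varepsilon>0$ so small that $\|y\|_\infty<\varepsilon$ forces $n_k^2\pi^2/T_y^2\le\chi_{kk}(t)-|\chi_{12}(t)|\le\chi_{kk}(t)+|\chi_{12}(t)|\le(n_k+1)^2\pi^2/T_y^2$ for $k=1,2$ and all $t\ge0$. By \eqref{eigenroots} the two one-parameter families of Hill equations in \eqref{twofamilies} are precisely the scalar equations \eqref{tantiscalari} attached by Lemma \ref{suff} to the $T_y$-periodic matrix ${\bf \Xi}(t)$ of \eqref{theta6}, and their coefficients are now squeezed between two consecutive resonance values; the Zhukovskii test \cite{zhk} (see also \cite[Test~1 p.697, vol.2]{yakubovich}) then guarantees that every equation in \eqref{twofamilies} is strongly stable. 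An application of Lemma \ref{suff} yields the strong stability of \eqref{theta5}, which by definition means that $y$ is torsionally stable.

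I expect the main obstacle to be the uniformity in $t$ of the convergence $\chi_{kk}(t)\to\gamma_k$, $\chi_{12}(t)\to0$ together with the behaviour of the period: one must be sure that shrinking the amplitude of the nonlinear mode drives $T_y$ to the linear period $T_0$ and that the perturbations of $\gamma_k$ and of $0$ are $o(1)$ over a full period, so that the strict inclusions \eqref{boh} survive after replacing $T_0$ by $T_y$. Once this is granted, the remaining steps — continuity of $y\mapsto\chi_{jk}$, the identification of \eqref{twofamilies} with \eqref{tantiscalari} through \eqref{eigenroots}, and the invocation of the Hill-equation test and of Lemma \ref{suff} — are routine.
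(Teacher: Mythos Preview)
Your proposal is correct and follows essentially the same route as the paper: use Lemma~\ref{limit} to identify the $y\equiv0$ limit, invoke \eqref{assumption0} to place $\gamma_k$ strictly between consecutive resonance values \eqref{boh}, use continuity (and $T_y\to T_0$) to squeeze the coefficients of \eqref{twofamilies} into the same gap, apply Zhukovskii's test, and conclude via Lemma~\ref{suff}. If anything, you are more explicit than the paper about why $\|y\|_\infty$ controls $\|y_x\|_\infty,\|y_{xx}\|_\infty$ (finite-dimensional trigonometric polynomial) and about the need for $T_y\to T_0$, both of which the paper leaves implicit.
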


Clearly, a solution $y$ of \eq{systode} with small $L^\infty$-norm also has small energy. Theorem \ref{conclusion} says nothing about the stability
of \eq{theta5} when the periodic solution $y$ of \eq{nony} has large amplitude (or energy). We now address this question.\par
Instead of \eq{theta5} we consider a {\em first order} linear differential system such as
\neweq{linearmat}
\dot{Z}=A(t)Z\, ,
\endeq
where $Z\in\R^{2\nu}$ for some $\nu\ge2$ and $A$ is a $T$-periodic $2\nu\times2\nu$ matrix for some $T>0$; the system \eq{theta5} may be written in
the form \eq{linearmat} by setting $Z=(W,\dot{W})$. Define a transfer map $Z(t)\mapsto Z(t+T)$ in terms of a suitable transition matrix $M$,
that is, $Z(t+T)=M\, Z(t)$ with $Z$ being a solution of \eq{linearmat}. It is clear that if all the (complex) eigenvalues of $M$ have modulus 1,
then the solutions of \eq{linearmat} are globally bounded and therefore the trivial solution $Z\equiv0$ of \eq{linearmat} is stable. The numerical
procedure described in Section \ref{numstab} computes these eigenvalues.\par
Ortega \cite{ortega} considers a family of {\em nonlinear} Hill equations such as
\neweq{nonhill}
\ddot{z}+a(t)z+f(z)=0\qquad\mbox{with }a(t+T)=a(t)\, ;
\endeq
a typical example is $f(z)=z^3$ but many other nonlinearities $f$ yield the same behavior. Taking $Z:=(z,\dot{z})$, this equation may be
reduced to a first order nonlinear $2\times 2$ system of the kind of \eq{linearmat}:
\neweq{nonlinearmat}
\dot{Z}=A(t)Z+F(Z)\, .
\endeq
Under suitable conditions on the nonlinearity $F$, Ortega proves that if the trivial solution of \eq{linearmat} is stable,
then also the trivial solution of \eq{nonlinearmat} is stable.
This sufficient condition shows that a linearization process is legitimate, if one aims to study the stability of \eq{nonlinearmat}.\par
In their paper, Cazenave-Weissler \cite{cazw2} analyzed the stability of simple modes of \eq{nonlocw} by using techniques from the Floquet theory,
combined with Poincar\'e maps and the Hill equations. They were able to prove that, if the energy is sufficiently large, then simple modes of
\eq{nonlocw} become unstable: they take advantage of the fact that \eq{nonlocw} is ``almost linear''.\par
For a different nonlinear and local system of ODE's it was proved in \cite{bgz} that the same fact occurs, large energies yield instability of modes.
The analysis in \cite{bgz} is fairly precise and is made possible by the fact that the linearized dynamical system leads to some Mathieu
equations (a particular case of the Hill equations) for which more precise theoretical tools are available.\par
Therefore, both for nonlinear nonlocal problems \cite{cazw2} and for nonlinear local problems \cite{bgz}, the instability of the nonlinear longitudinal
modes of \eq{systode} with large energy has to be expected. The system \eq{caps1}-\eq{caps2} is certainly much more complicated than the
systems considered there, and a rigorous proof of the torsional instability at high energies seems out of reach, but we believe that it is governed by
the same phenomena. In the next section we will see that the numerical experiments support our belief.

\subsection{Thresholds of torsional instability}\label{numstab}

Take a $k$-th nonlinear longitudinal mode $y$ of \eq{systode} having period $T_y$ (see Definition \ref{nnllmm}) and use it to build the coefficient
matrix $\Xi(t)$ in \eq{theta5}. Then transform \eq{theta5} into \eq{linearmat} by setting $Z=(W,\dot{W})$. We choose $2\nu$ different initial conditions
$Z(0)=e_k$, where now $\{e_k\}_{k=1,\ldots,2\nu}$ is the canonical basis in $\mathbb{R}^{2\nu}$, and for each initial condition we solve \eq{linearmat} in
$[0,T_y]$. Let $M$ be the transition matrix obtained by placing into adjacent columns the solutions $Z(T_y)$ of \eq{linearmat} for all the initial conditions
$Z(0)=e_k$ ($k=1,...,2\nu$).
Compute its eigenvalues and consider the largest in modulus: when such value is larger than 1 the longitudinal mode is torsionally unstable.\par
In this section we implement the just described procedure and we describe the numerical computations of the thresholds of torsional instability for
each branch of periodic longitudinal modes. Several different indicators are used to measure the instability. The values of the parameters involved
in the equations are taken as for the collapsed TNB, see Section \ref{constants}.\par
Table \ref{thres} summarises the thresholds of instability for the different branches of longitudinal modes: the second column is the energy threshold,
the third column is the period of the longitudinal mode at the threshold, and the fourth is a measure of the amplitude of oscillation, that is, the
gap between the maximum and the minimum of the longitudinal mode $y$:
$$\Delta:=\max_{x,t}y(x,t)-\min_{x,t}y(x,t)\, .$$

\begin{table}[htdp]
\begin{center}
{\small
\begin{tabular}{|c|c|c|c|c|}
\hline
Branch&Energy ($MJ$)&Period ($s$)&$\Delta$ ($m$)\\
\hline
 1  & 38. & 11.22&5.8\\
 2  & 51.8 & 8.46&10\\
 3  & 15.5 & 5.48&6.5\\
 4  & 53.7 & 3.97&7.8\\
 5  & 74.1 & 3.14&6.9\\
 6 &  56.6 & 2.53&4.5\\
 7 &  91.4 & 2.18&5.2\\
 8 &  95.8 & 1.86&4.6\\
 9 &  87.1 & 1.59&3.8\\
 10 & 82.1 & 1.38&3.3\\
\hline
\end{tabular}
}
\end{center}
\caption{Thresholds of instability on each branch of nonlinear modes.}\label{thres}
\end{table}

A careful look at the video \cite{tacoma} and the data in the Report \cite{ammann} confirm that the oscillations prior to the TNB collapse were
of the order of a few meters, as in our numerical results, see the values of $\Delta$ in Table \ref{thres}. The period at the threshold of instability
should be compared with the period of the longitudinal mode on the same branch when the energy is close to 0, that is, when the nonlinear system
\eq{systode} is close to linear, see Table \ref{almostlinear}.\par
Let us now introduce an effective tool to measure the torsional instability which allows the comparison of different longitudinal modes.
The Floquet Theorem (see \cite[Theorem 8.23]{chicone}) states that, if $\Psi(t)$ is a fundamental matrix solution of \eq{linearmat}, then there exists
a matrix $B$ and a $T_y$\,-\,periodic matrix $P$ such that $\Psi(t)=P(t)e^{tB}$; note that both $B$ and $P$ may be complex-valued. The eigenvalues
$\lambda_1,...,\lambda_{2\nu}$ of $e^{T_yB}$ are called the {\em characteristic multipliers} of \eq{linearmat}: if $V_1,...,V_{2\nu}$ denote the
corresponding normalized eigenvectors, then the solution $Z_j$ of \eq{linearmat} satisfying the initial condition $Z_j(0)=V_j$ (for some $j=1,...,2\nu$)
also satisfies $Z_j(T_y)=\lambda_jZ_j(0)$ and, in turn, $Z_j(mT_y)=\lambda_j^mZ_j(0)$ for any integer $m\ge1$. Whence $|\lambda_j|^{1/T_y}$ represents
the rate of growth of the amplitude of oscillation of $Z_j$. This leads to the following definition:

\begin{definition}
We call {\bf expansion rate} of \eqref{linearmat} the largest rate of growth for solutions of \eqref{linearmat}:
$$\mathcal{ER}:=\max_j\ |\lambda_j|^{1/T_y}\, .$$
\end{definition}

For every branch of nonlinear longitudinal modes we compute the expansion rate of \eq{linearmat}, that is, the expansion rate of the torsional
component of the motion. Figure \ref{evs} displays the
\begin{figure}[htdp]
\begin{center}
{\includegraphics[height=35mm, width=75mm]{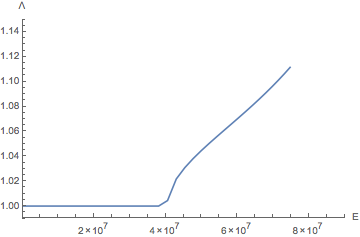}}
{\includegraphics[height=35mm, width=75mm]{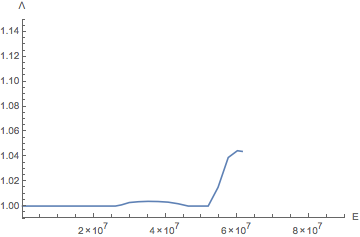}}
{\includegraphics[height=35mm, width=75mm]{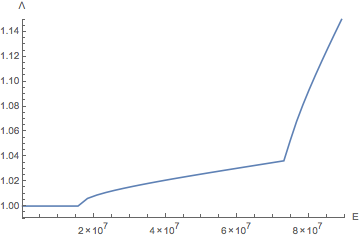}}
{\includegraphics[height=35mm, width=75mm]{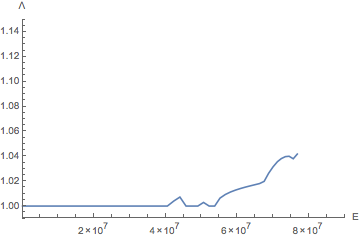}}
\caption{Expansion rate for modes 1 to 4 with respect to the energy.}\label{evs}
\end{center}
\end{figure}
expansion rate as a function of the energy for the branches of nonlinear longitudinal modes
of order 1 to 4 whereas Figure \ref{evs2} displays the same for the modes of order 5 to 10.
\begin{figure}[htdp]
\begin{center}
{\includegraphics[height=35mm, width=75mm]{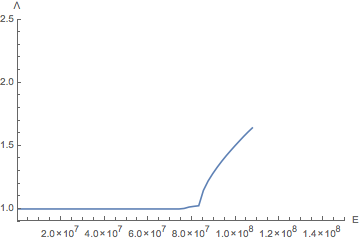}}
{\includegraphics[height=35mm, width=75mm]{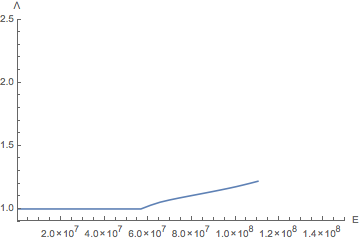}}\\
{\includegraphics[height=35mm, width=75mm]{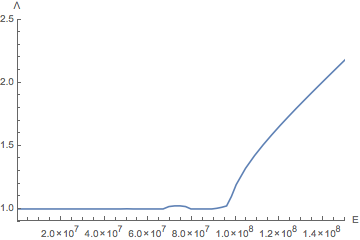}}
{\includegraphics[height=35mm, width=75mm]{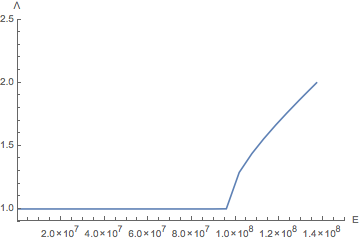}}\\
{\includegraphics[height=35mm, width=75mm]{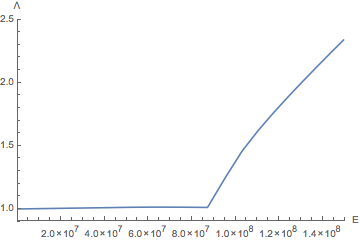}}
{\includegraphics[height=35mm, width=75mm]{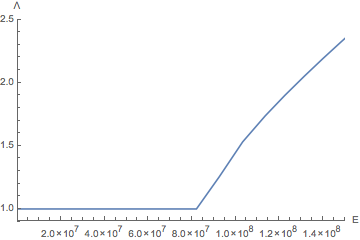}}
\caption{Expansion rate for modes 5 to 10 with respect to the energy.}\label{evs2}
\end{center}
\end{figure}
Note that the vertical axis (the $\mathcal{ER}$-axis) has different scales in the two figures. The reason is that $\mathcal{ER}$ is much larger for
high nonlinear longitudinal modes. All the plots clearly show that there exists a threshold above which the torsional stability of the longitudinal
mode is lost, that is, where the expansion rate exceeds $1$.\par
Table \ref{ER} summarises the expansion rates on all the branches that we considered, for different values of the energy. Some values in Table \ref{ER}
are not available because the Newton algorithm would require too small increments of $T$ in order to follow the branch of solutions. Note that the energy
of a longitudinal mode is related to the amplitude of oscillation $\Delta$ and, for a given energy, higher modes have smaller amplitudes.
Therefore, for the same amplitude of oscillation, the expansion rate of the 9th and 10th longitudinal mode is larger than that
of the 5th, see also Table \ref{thres}.

\begin{table}[htdp]
\begin{center}
{\small
\begin{tabular}{|c|c|c|c|c|c|c|c|}
\hline
Branch&2&4&6&8&10&12&14\\
\hline
 1 & 1. & 1. & 1.0662 & --- & --- & --- & --- \\
 2 & 1. & 1.00365 & 1.03904 & --- & --- & --- & --- \\
 3 & 1.00614 & 1.02071 & 1.02961 & 1.08141 & 1.20949 & --- & --- \\
 4 & 1. & 1. & 1.01287 & --- & --- & --- & --- \\
 5 & 1. & 1. & 1.00001 & 1.01521 & 1.50051 & --- & --- \\
 6 & 1. & 1. & 1. & 1.09919 & 1.16332 & --- & --- \\
 7 & 1. & 1. & 1. & 1. & 1.09852 & 1.58567 & 1.97158 \\
 8 & 1. & 1. & 1. & 1. & 1.00112 & 1.66552 & --- \\
 9 & 1. & 1. & 1.01322 & 1.01353 & 1.24852 & 1.76429 & 2.12488 \\
 10 & 1. & 1. & 1. & 1. & 1.25447 & 1.73715 & 2.05263 \\
\hline
\end{tabular}
}
\end{center}
\caption{Expansion rates versus energy (in MJ).}\label{ER}
\end{table}\vfill\eject

According to Eldridge \cite[V-3]{ammann}, a witness on the day of the TNB collapse, {\em the bridge appeared to be behaving in the customary
manner} and the motions {\em were considerably less than had occurred many times before}. From \cite[p.20]{ammann} we also learn that in the months
prior to the collapse {\em one principal mode of oscillation prevailed} and that {\em the modes of oscillation frequently changed}.
On the day of the collapse, the torsional oscillations at the TNB started suddenly and, according to Farquharson \cite[V-10]{ammann}, {\em the motions,
which a moment before had involved a number of waves (nine or ten) had shifted almost instantly to two}. These observations show that the transfer of
energy (from longitudinal to torsional) depends on the particular excited mode.\par
Table \ref{ER} explains why torsional oscillations did not appear earlier at the TNB even in presence of larger longitudinal oscillations,
as reported by Eldridge: there are longitudinal modes which have a very small expansion rate. Moreover, Table \ref{ER} also explains why torsional
oscillations appeared with smaller longitudinal oscillations: the 9th and 10th longitudinal mode appear more prone to generate torsional oscillations
because they have large expansion rate $\mathcal{ER}$.

\subsection{Glossary and  constants}\label{constants}

In this section we list the constants considered throughout the paper.

\begin{itemize}
\item $y=y(x,t)$ downwards vertical displacement of the center of the deck
\item $\theta=\theta(x,t)$ torsional angle of the deck
\item $\ell$ half width of the deck ($6m$)
%\item $w_\pm=w_\pm(x,t)=y(x,t)\pm\ell\theta(x,t)$ displacement of the edges of the deck
\item $L$ length of the deck ($853.44m$)
\item $M$ mass linear density of the deck  ($7198kg/m$)
\item $E$ Young modulus of the deck and of the cables ($210GPa$)
\item $I$ linear density of the moment of inertia of the cross section
($0.15m^4$)
\item $G$ shear modulus of the deck ($81GPa$)
\item $K$ torsional constant of the deck ($6.44\cdot10^{-6}m^4$)
\item $m$ mass linear density of the cable ($981kg/m$)
\item $H_0$ horizontal component of the tension of the cables ($58300kN$)
\item $H(x)$ tangential component of the tension of the cables
\item $L_c$ length of the cable ($868.62m$)
\item $s_0$ length of the longest hanger ($72m$)
\item $A$ area of the section of the cable ($0.1228m^2$)
%\item $E_{kd}$ kinetic energy of the deck
%\item $E_B$ potential energy due to the bending of the deck
%\item $E_H$ potential energy due to the (additional w.r.t. the rest position) tension of the cables
%\item $\dot a=\partial_t a$ and $a'=\partial_x a$
\item Bridge \emph{at rest} means bridge fully mounted, operational at equilibrium.
\item Bridge \emph{unloaded} means that the deck is not mounted.
\end{itemize}

\section{Conclusions}\label{conclu}

For long time the aerodynamic effects have been considered the only cause of the TNB collapse. But, as quickly surveyed in the Introduction, no
purely aerodynamic explanation is nowadays able to justify the origin of the torsional oscillation, which is the main culprit for the collapse.
In this paper we suggest a new nonlinear model, which improves \cite{agmjm}, and we focus our attention on the nonlinear structural behavior
of suspension bridges. To this end, we follow a suggestion by Irvine \cite{irvine} and we isolate the bridge from aerodynamic effects and dissipation.
The new model and the resulting evolution PDEs which describe the dynamics of suspension bridges are presented in Section \ref{cabbeam}.\par
In Section \ref{nlm} we explain how to find periodic longitudinal oscillations as solutions of a nonlinear system of ODEs: contrary to linear systems,
the period depends on the energy and on the amplitude of the oscillation.\par
In Section \ref{theores} we describe the theoretical framework governing the structural instability of suspension bridges. Our analysis is based
on tools from the Floquet theory and a sufficient condition for the stability at low energy is obtained in Section \ref{nu22}.
Then, in Section \ref{numstab}, we put into the model the quantitative parameters of the collapsed TNB and we compute its thresholds of instability.
The results are in line with the values observed on the day of the collapse and well fit the Report \cite{ammann}. First, the computed critical
amplitude of longitudinal oscillations is the same as in the video \cite{tacoma}. Second, our results explain why the TNB withstood larger longitudinal
oscillations on low modes, but failed for smaller longitudinal oscillations on higher modes,
see \cite[pp.28-31]{ammann}. Our results show that the origin of torsional instability is structural.\par
It is clear that, in absence of wind or external loads, the deck of a bridge remains still. On the other hand, when the wind hits a
bluff body (such as the deck of the TNB) the flow is modified and goes around the body. Behind the deck, or a ``hidden part'' of it,
the flow creates vortices which are, in general, asymmetric. This asymmetry generates a forcing lift which starts the vertical oscillations of the deck.
Up to some minor details, this explanation is shared by the whole community and it has been studied with great precision in wind tunnel tests,
see e.g.\ \cite{larsen,scanlan,scott}. The vortices induced by the wind increase the internal energy of the structure and generate wide longitudinal
oscillations which look periodic in time. This is the point where our analysis starts, that is when the longitudinal oscillations of the structure reach a
periodic motion which is maintained in amplitude by a somehow perfect equilibrium between the input of energy from the wind and internal dissipation.
Throughout this paper we have seen that, in such situation, if the longitudinal oscillation is sufficiently large, then a structural instability appears:
this is the onset of torsional oscillations. In order to obtain a more accurate description of the dynamics of suspension bridges, the next step should
be to take into account also the behavior of the aerodynamic and dissipation effects after the appearance of the torsional oscillations.\par\medskip\noindent
\textbf{Acknowledgements.} The Authors are partially supported by the PRIN project {\em Equazioni alle derivate parziali di tipo ellittico
e parabolico: aspetti geometrici, disuguaglianze collegate, e applicazioni}, and they are members
of the Gruppo Nazionale per l'Analisi Matematica, la Probabilit\`a e le loro Applicazioni (GNAMPA)
of the Istituto Nazionale di Alta Matematica (INdAM).

\end{document}